\newtheorem{csp}{CSP}
\newcommand{\ra}{\rightarrow}
\newcommand{\N}{\mathds N}
\newcommand{\ms}{\mapsto}
\newcommand{\ol}{\overline}
\newcommand{\mc}{\mathcal}
\newcommand{\id}{\operatorname{id}}
\begin{document}

\title{The semigroups of order 9 and their automorphism groups}

\author{Andreas Distler        \and Tom Kelsey}

\institute{Andreas Distler \at
              Centro de \'{A}lgebra da Universidade de Lisboa, 1649-003 Lisboa, Portugal\\
              Tel.: +351 21 790 4716\\
              Fax: +351 21 795 4288\\
              \email{adistler@fc.ul.pt}           %
           \and
           Tom Kelsey \at
             School of Computer Science,
University of St Andrews, KY16 9SX, UK\\
}

\date{Received: date / Accepted: date}

\maketitle

\begin{abstract}
We report the number of semigroups with $9$ elements 
up to isomorphism or anti-isomorphism to be
$52\,989\,400\,714\,478$ and up to isomorphism to be
$105\,978\,177\,936\,292$. We obtained these results by combining
computer search with recently published formulae for the number of
nilpotent semigroups of degree 3. We further provide a complete
account of the automorphism groups of the semigroups with at most $9$
elements. We use this information to deduce that there are
$148\,195\,347\,518\,186$ distinct associative binary operations on an
$8$-element set and $38\,447\,365\,355\,811\,944\,462$ on a
$9$-element set.

\keywords{Semigroup \and Automorphism group \and Enumeration}
\subclass{05A15 \and 20M99}
\end{abstract}

\section{Introduction}
\label{intro}

Classification of finite semigroups of a given order goes
back to the 1950s when Tamura
undertook 
hand calculations, first for orders $2$ and $3$ \cite{Tam53} and later for
order $4$ \cite{Tam54}. 
Around the same time Forsythe
introduced computer search to the problem~\cite{For55} implementing a
backtrack algorithm to find semigroups on a
$4$ element set. Subsequently various authors refined his
approach~\cite{MS55,Ple67,JW77,SYT94}, so that by 1994 semigroups were
classified up to order 8. These semigroups are nowadays available in
the data library \textsf{Smallsemi}~\cite{smallsemi}.
A recent advance in the enumeration of semigroups are
the formulae derived in~\cite{DM12} for the numbers of semigroups $S$
for which $|\{abc\mid a,b,c\in S\}|=1$ and $S$ is not a zero semigroup.
Such semigroups are called \emph{nilpotent of degree 3}. These were
used in the attempt to establish an asymptotic lower
bound on the number of all semigroups on a finite
set~\cite{KRS76}. The analysis in~\cite{SYT94} shows that for order
$8$ about $99.4 \%$ of the semigroups are nilpotent of degree~$3$. 

Research investigating the automorphism groups of semigroups of a
given order has a far briefer history. It has long been known that
every group appears as the automorphism group of some semigroup which
is a consequence of the analogous result for graphs~\cite{Fru39}, but
the first algorithm to compute automorphism groups of semigroups was
only presented in~\cite{ABMN10} where Ara\'ujo {\it et al.}~compute as
one application of their general method the automorphism groups of
semigroups of orders at most $7$.

Naturally the principal aim of the aforementioned investigations
was to consider `structural types' of semigroups of the given order
rather than distinct semigroups on a set of that size. Two
semigroups $S$ and $R$ are \emph{anti-isomorphic} if one
is isomorphic to the dual of the other, that is if there
exists a bijection $\sigma: S \ra R$ such that $\sigma(ab) =
\sigma(b) \sigma(a)$ for all $a,b \in S$; in this case $\sigma$
is an \emph{anti-isomorphism}. For short we write
\emph{(anti\=/)isomorphic} to mean isomorphic or anti-isomorphic and
analogously write \emph{(anti\=/)isomorphism}. Classification of
semigroups has mainly been done up to (anti\=/)isomorphism. The
connection to a classification up to isomorphism is provided by those
semigroups which are anti-isomorphic to themselves, that is isomorphic
to their dual, called \emph{self-dual}.

In this paper we enumerate semigroups of order $9$ up to isomorphism
and (anti\=/)isomorphism. 
The number up to (anti\=/)isomorphism, $52\,989\,400\,714\,478$, was
first reported in~\cite{DK09},  without  
the explanation and justification that are provided here.
The number up to isomorphism is $105\,978\,177\,936\,292$.
We also classify the semigroups of orders $8$ and $9$ by their
automorphism groups (see Tables~\ref{aut_8} and~\ref{aut_9}) and
deduce that the number of distinct semigroups on a set with $8$ elements
is $148\,195\,347\,518\,186$ and on a set with $9$ elements is
$38\,447\,365\,355\,811\,944\,462$. We find that only a small proportion of
the subgroups of the symmetric groups of degrees $8$ and $9$ are
isomorphic to the automorphism group of any such semigroup
(Table~\ref{tab_autgrp_isos}) and prove in particular that the
automorphism group of a semigroup is transitive if and only if it is a
rectangular band (Propostion~\ref{prop_autgrp}). 

We obtain semigroups and their automorphism groups by computer
search. For the enumeration it suffices to count those semigroup that
are \emph{not} nilpotent of degree~$3$, since their numbers are
known~\cite{DM12}. We perform a series of computations to find
multiplication tables of the remaining semigroups, utilising an
approach similar to that described in~\cite{DK09}: we model the search
as a family of constraint satisfaction problems and execute the
constraint solver \textsf{Minion}~\cite{GJM06} to get the
solutions. The computer algebra system \textsf{GAP}~\cite{GAP4} is
used in the preparation of the input files, in particular for
calculations to avoid (anti\=/)isomorphic
solutions. We also use \textsf{Minion} to search for
automorphism groups and \textsf{GAP} to identify their isomorphism
types.

In the forthcoming section we explain how we represent semigroups and
isomorphisms respectively anti-isomorphism between them in an adequate
way for the computer search.
Section~\ref{sec_basic_model}
contains an introduction to Constraint Satisfaction, and a description of a 
formal model for finding canonical representatives of semigroups of a
given order up to (anti\=/)isomorphism. We also describe adaptations
of the model that allow us to find automorphism groups of semigroups,
and to find self-dual semigroups. 
In Section~\ref{sec_cases} we replace the initial model with an
equivalent family of models following an idea introduced by
Plemmons in the classification of semigroups of order
6~\cite{Ple67}.
Additionally we extend the idea proving a unified framework for this
approach. The extended approach allows one to incorporate mathematical
knowledge more easily into the search for particular types of
semigroups, a method that we apply in Section~\ref{sec_bands} to the
enumeration of bands.
In Section~\ref{sec_sg9} we report our computational experience and
give detailed classification results for semigroups of order $9$ by
various properties. For comparative purposes, we also show the
equivalent results for smaller orders in certain cases.
In the final section we describe our approach to the computation of
the automorphism groups of semigroups. We present results for
semigroups of order at most 9 up to isomorphism and up to
(anti\=/)isomorphism. 
We use this knowledge to calculate the numbers
of distinct semigroups on a set with at most $9$ elements, and discuss
which subgroups of the symmetric group are isomorphic to the
automorphism group of a semigroup.

\section{Preliminaries}
\label{sec:1}

In this paper the elements of a semigroup will mostly be
$\{1,2,\dots,n\}, n\in\N$ which we abbreviate by $[n]$. As usual the
multiplication table of a semigroup $S$ is the
square matrix $T_S =(t_{a,b})_{a,b\in S}$ with $t_{a,b}=ab$. If the
underlying set of $S$ is $[n]$ we may assume that the rows and columns
of the multiplication table are indexed according to their position in
the table which allows us to omit the row and column header. Under this
convention every square matrix of size $n$ with entries in $[n]$
uniquely defines a binary operation on $[n]$ and we denote the set of
all such matrices by $\Omega_n$.

We want to describe isomorphisms and anti-isomorphisms in terms of
multiplication tables. Consider first two isomorphic semigroups $S$
and $R$ on $[n]$ and a permutation $\pi$ in the symmetric group $S_n$
which is an isomorphism from $S$ to $R$. Given that $T_S=
(t_{i,j})_{i,j\in [n]}$ is the multiplication table of $S$ it follows
that $((t_{i,j})^{\pi})_{i^{\pi},j^{\pi}\in [n]}$ is the
multiplication table of $R$. And if $S$ and $R$ are anti-isomorphic and
$\pi$ an anti-isomorphism from $S$ to $R$ then
$((t_{j,i})^{\pi})_{i^{\pi},j^{\pi}\in [n]}$ is the multiplication
table of $R$. Hence we can capture isomorphism and anti-isomorphism in
the following action $\phi:\Omega_n\times (S_n\times C_2) \ra
\Omega_n$ sending a multiplication table $T\in\Omega_n$ to
\begin{equation}
\label{eq_action}
T^{(\pi,c)} = 
\begin{cases}
((t_{i,j})^{\pi})_{i^{\pi},j^{\pi}\in [n]} & \textrm{if} \; c = 1_{C_2}\\
((t_{j,i})^{\pi})_{i^{\pi},j^{\pi}\in [n]} & \textrm{otherwise}.
\end{cases}
\end{equation}
The orbits of this action are sets of those multiplication tables
which define (anti\=/)isomorphic binary operations. 

To avoid confusion and repetition of  similar arguments we will
throughout the paper use the action given in \eqref{eq_action} only,
incorporating both isomorphism and anti-isomorphism. Considerations up
to isomorphism using an action of $S_n$ on $\Omega_n$ are left to the
reader.
\section{Enumeration using Constraint Satisfaction}
\label{sec_basic_model}

As in previous classifications of semigroups we search their
multiplication tables. To formalise our task we use the language of
Constraint Satisfaction, a technique developed to model and solve
discrete combinatorial problems, and start by giving basic
definitions.

\begin{definition}
\label{def_csp}
A \emph{constraint satisfaction problem (CSP)} is a triple $(V,D,C)$,
consisting of a finite set $V$ of \emph{variables}, a finite set $D$,
called the \emph{domain}, of \emph{values}, and a set $C$ containing
subsets of $\{h\mid h:V\ra D\}$ called \emph{constraints}.
\end{definition}

In practice,  instead of being subsets of the set of all functions
from $V$ to $D$, constraints are formulated as conditions
defining such subsets. It then becomes intuitively clear that one is
looking for assignments of values in the domain of a CSP to all
variables such that no constraint is violated. This idea is formalised
in the next definition. 

\begin{definition}
Let $L=(V,D,C)$ be a CSP. A partial function $p: V \ra D$ is an
\emph{instantiation}. An instantiation $p$ \emph{satisfies} a
constraint if there exists a function $h$ in the constraint, such
that $h(x)=p(x)$ for all $x\in V$ on which $p$ is defined. An
instantiation is \emph{valid}, if it satisfies all the constraints in $C$.
An instantiation defined on all variables is \emph{total}. A valid,
total instantiation is a \emph{solution} to $L$. The number of
solutions of $L$ will be denoted by $\#L$.
\end{definition}

\subsection{Counting all semigroups}

We formulate a CSP  which has those multiplication tables
in $\Omega_n$ as solutions that define an associative multiplication:

\begin{csp}
\label{CSP_basic}
For $n\in \N$ define a CSP $L_n=(V_n,D_n,C_n)$. The set $V_n$ consists
of $n^2$ variables $\{ t_{i,j} \mid 1 \leq i,j \leq n \}$, one for
each position in an $(n \times n)$-multiplication table, having domain
$D_n=[n]$. The constraints in $C_n$ are
\begin{equation}
\label{const_assoc}
t_{t_{i,j},k} = t_{i,t_{j,k}} \textrm{ for all } i,j,k \in [n],
\end{equation}
reflecting associativity.
(Note that \eqref{const_assoc} is a slight abuse of notation: using
a variable as an index shall refer to its value.)
\end{csp}

The multiplication table defined by a solution of $L_n$ from
CSP~\ref{CSP_basic} will be associative due to the constraints $C_n$
and, in turn, the table of every associative multiplication fulfils
the constraints in $C_n$. Thus the valid, total
instantiations for $L_n$ correspond to the semigroups on
$[n]$. As the constraints $C_n$ enforcing associativity will be
present in every following model, the solutions will always define
semigroups and are often referred to as such.

The number of all different semigroups on $[n]$ grows rapidly with $n$
and most of the semigroups are nilpotent of degree
$3$~\cite{KRS76}. As the construction for nilpotent
semigroups of degree $3$ on $[n]$ is also given in~\cite{KRS76}, they do not have to be searched for. We
forbid these by requiring that not all multiplications of three
elements give the same result. Adding the constraint
\begin{equation}
\label{const_3nil}
\exists i,j,k,q,r,s \in [n]: t_{i,t_{j,k}} \neq t_{q,t_{r,s}}
\end{equation}
to $C_n$ yields the CSP, denoted as  $L_n^{-3}$, having as solutions all different semigroups on
$[n]$,  which are neither nilpotent of degree $3$ nor a
zero semigroup.

\subsection{Counting up to (anti\=/)isomorphism}
\label{ssec_SB}

Our primary aim is not to find all semigroups on $[n]$,
but rather to find representatives for all types of structurally different
semigroups, where structurally different means up to (anti\=/)isomorphism.

With increasing $n$ it becomes -- due to the large number of solutions
-- very quickly impractical to test for every two semigroups from
the solutions of $L_n$ or $L_n^{-3}$ whether they are 
(anti\=/)isomorphic.
Instead we shall define a canonical solution for
each class following a standard approach in the classification of
algebraic and combinatorial structures. To make the test for
canonicity an integral part of the CSP we adapt a common symmetry
breaking technique from Constraint Satisfaction.

We first need another way to describe a solution of a CSP.
A \emph{literal} (also called \emph{variable-value pair}) of a CSP
$L=(V,D,C)$ is an element in the Cartesian product $V\times
D$. Literals are denoted in the form $(x=k)$ with $x\in V$ and $k\in
D$. An instantiation $p$ corresponds to the set of literals
$\{(x=p(x))\mid p \mbox{ is defined on } x\}$, which uniquely determines
$p$ (but not every set of literals yields an instantiation). In
particular we get an action of $S_n\times C_2$ on literals, induced
from the action \eqref{eq_action} on multiplication tables. 

\begin{equation}
\label{lit_map}
(t_{i,j}=k)^{(\pi,c)} = 
\begin{cases}
(t_{i^{\pi},j^{\pi}} = k^{\pi}) & \textrm{if} \; c = 1_{C_2}\\
(t_{j^{\pi},i^{\pi} }= k^{\pi}) & \textrm{otherwise}.
\end{cases}
\end{equation}

Given a fixed ordering $(\chi_1,\chi_2,\dots,\chi_{|V||D|})$ of all
literals in $V\times D$, an instantiation $p$ can be represented as a
bit vector of length $|V||D|$. The bit in the $i$-th position is 1 if
$\chi_i$ is contained in the set of literals corresponding to $p$ and
otherwise the bit is 0. The resulting bit vector for the instantiation
$p$ will be denoted by $(\chi_1,\chi_2,\dots,\chi_{|V||D|})_{p}$.

Sets of solutions of $L_n$ and $L^{-3}_n$ that lead to (anti\=/)isomorphic
semigroups are orbits under the action of $S_n \times C_2$ given in
\eqref{eq_action}. There is one solution in each orbit for which the
corresponding bit vector is lexicographic maximal, which we take to be the
property identifying the canonical solution in the orbit. We denote
the standard lexicographic order on vectors by $\prec$.
We obtain a new CSP $\ol{L}_n$ from $L_n$, respectively
$\ol{L}_n^{-3}$ from $L^{-3}_n$, by adding, for all non-identity 
$g \in S_n \times C_2$, the constraint consisting of those functions
$h:V\ra D$ for which
\begin{equation}
\label{const_lex}
(\chi_1^{g}, \chi_2^{g}, \dots, \chi_{|V||D|}^{g})_{h}
\preceq (\chi_1,\chi_2,\dots,\chi_{|V||D|})_{h}.
\end{equation}
The solutions of the new CSPs 
are pairwise not (anti\=/)isomorphic, because $\ol{L}_n$
respectively $\ol{L}_n^{-3}$ have as solutions all canonical tables
from orbits of solutions of $L_n$ respectively $L^{-3}_n$. Hence
$\#\ol{L}_n$ equals the number of semigroups of order $n$ up
to (anti\=/)isomorphism, while $\#\ol{L}_n^{-3}$ equals the number of
semigroups of order $n$ that are not nilpotent of degree $3$ nor a
zero semigroup up to (anti\=/)isomorphism.

There is a computational drawback of the method explained in this
section to avoid (anti\=/)isomorphic solutions: the number of
canonicity constraints~\eqref{const_lex} to be added to $L_n$ to
obtain $\ol{L}_n$ is $2n! - 1$ and their length is $n^3$, which makes
the space requirements for the formulation of the constraints grow
very large already for small values of $n$. 
We can improve the situation to some extent by shortening
in \eqref{const_lex} the vectors on both sides depending on $g$ without
influencing the constraint. The technique we use is based on
\cite[Rule 1]{FH03} and the easiest example of its application is the
removal of literals which appear at the same position in both
vectors. It remains the more significant problem that the number of
constraints grows superexponentially with $n$. In Section~\ref{sec_cases}
we will explain an approach that 
ultimately overcomes this obstacle in our specific enumeration
problem.

\subsection{Finding automorphisms and self-dual semigroups} 
\label{ssec:autos}

A straightforward variation of the canonicity
constraints~\eqref{const_lex} allows to identify or prescribe
automorphisms of solutions of the CSP. A bijection $\pi\in S_n$ is an
automorphism if equality holds in constraint~\eqref{const_lex}
corresponding to $(\pi,1_{C_2})$. We can now either record for each
solution for which of the constraints equality holds; or alternatively
specify the automorphism group in advance, requiring equality
for the constraints corresponding to permutations in the
chosen group and strict inequality for all other permutations.

A similar approach can be used to identify self-dual semigroups. If we
require equality to hold for at least one of the
constraints~\eqref{const_lex} corresponding to an
anti-isomorphism (that is an element in
$S_n\times C_2$ with non-trivial $C_2$ component) then the solutions
will be exactly the self-dual semigroups of the original CSP. 

\section{Families of CSPs}
\label{sec_cases}

Possible enhancements of the CSP $L_n$ are restricted  by
the fact that not much can be said about the multiplication table of a
semigroup in general without knowing any of the entries. We adapt an
idea from~\cite{Ple67} to make the search for multiplication tables of
semigroups more efficient. Instead of running a single computation,
the search is split into cases depending on the diagonal of the
multiplication table. A major advantage of this approach is
that not all diagonals have to be considered when
searching for semigroups up to (anti\=/)isomorphism. 

In our adaptation we formulate one CSP for every diagonal.  
Note that the diagonals of multiplication tables
naturally correspond to functions from $[n]$ to itself. For a table
$T=(t_{i,j})_{i,j\in [n]}$ define a function $f_T:[n]\ra [n], i\ms
t_{i,i}$. Two tables can lead to the same function, but the
correspondence between diagonals and functions from $[n]$ to itself is
a bijection.

\begin{csp}
\label{csp_diag}
Given a function $f: [n] \rightarrow [n]$
define a CSP $L_f=(V_n,D_n,C_f)$ based on $L_n=(V_n,D_n,C_n)$ from
CSP~\ref{CSP_basic} by adding for all $i \in [n]$ the constraint
\begin{equation}
\label{const_fix_diag}
t_{i,i} = f(i)
\end{equation}
to $C_n$ to obtain $C_f$.
\end{csp}

The solutions to $L_f$ are all multiplication tables in $\Omega_n$
defining a semigroup in which the square of the element $i$ is given
by $f(i)$. In other words, the entries on the diagonal of the
multiplication table are specified {\it a priori}. We note that for
some functions $f$ the CSP $L_f$ will not have any solutions.
For example every finite semigroup has at
least one idempotent, which yields that every function without a fixed
point leads to a CSP without solutions.         

For a set $\mc{F}$ of functions from $[n]$ to $[n]$,
denote by $\mc{L}_{\mc{F}}$ the family of CSPs $\{ L_f \mid f \in
\mc{F}\}$. Let $\mc{F}_n$ denote the set of all functions with at
least one fixed point from $[n]$
to $[n]$. Then the CSPs in $\mc{L}_{\mc{F}_n}$ have together
the same solutions as $L_n$.
To select a smaller subset of functions in ${\mc{F}_n}$ such
that the corresponding instances still contain every type of semigroup
of order $n$
up to (anti\=/)isomorphism, we use the following lemma which gives
conditions in a general setting. We shall apply it again to a
different family of CSPs in Section~\ref{sec_bands}. Many more
applications can be found in~\cite[Chapter 5]{Dis10}.

\begin{lemma}
\label{lem_case_split}
Let $\mc{L} = \{ L_x \mid x \in X\}$ be a family of CSPs with
disjoint solution sets, and let $\mc{T}$
be a superset of all solutions.
Further let 
\[
\phi: \mc{T} \times G \rightarrow \mc{T}, (T,g) \mapsto T^g
\]
be an action of a group $G$ mapping solutions to solutions and
let $\psi: \mc{T} \rightarrow X$ be a surjective function.

If each solution $T$ of one of the CSPs in $\mc{L}$ is a solution of
$L_{\psi(T)}$, and if $\phi^{\psi}$ is an induced action of
$G$ on $X$ (that is, $x^g= \psi(T^g)$ for $x=\psi(T)$ is
well-defined), then the following statements hold.
\begin{enumerate}[(i)]
\item Let $Y \subseteq X$ contain at least one element of every
  orbit from $X$ under the induced action $\phi^{\psi}$. Then
  the solutions of $\{L_y \mid y \in Y\}$ contain at least one element
  from every orbit of solutions under the action of $\phi$.
\item Let $S \in L_x$ and $T \in L_y$. If $S$ is equivalent to $T$,
  then $x$ is equivalent to~$y$.
\item Let $T \in L_x$. Then the set of solutions of $L_x$ equivalent
  to $T$ equals the orbit of $T$ under the stabiliser of $x$ in $G$.
\end{enumerate}
\end{lemma}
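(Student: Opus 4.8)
The plan is to prove the three statements in order, since each builds on the structure established by the hypotheses. The key observation throughout is that the defining property of the family, namely that every solution $T$ lies in $L_{\psi(T)}$, means that $\psi$ records exactly ``which CSP'' a given solution belongs to, and that the induced action $\phi^{\psi}$ makes $\psi$ equivariant: $\psi(T^g) = \psi(T)^g$ for all solutions $T$ and all $g \in G$. I would state this equivariance explicitly at the outset, because all three parts flow from it.

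For part (i), I would argue as follows. Take any solution $T$ of some CSP in $\mc{L}$, and consider its orbit under $\phi$. I want to show this orbit meets the solution set of $\{L_y \mid y \in Y\}$. Let $x = \psi(T) \in X$. By hypothesis $Y$ contains some representative $y$ of the $\phi^{\psi}$-orbit of $x$, so there is a $g \in G$ with $x^g = y$. Now apply $g$ to $T$: the image $T^g$ is again a solution (since $\phi$ maps solutions to solutions), and by equivariance $\psi(T^g) = \psi(T)^g = x^g = y$. Hence $T^g$ is a solution belonging to $L_y$, and it lies in the $\phi$-orbit of $T$. This shows every orbit of solutions meets $\{L_y \mid y \in Y\}$, as required.

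For part (ii), suppose $S \in L_x$ and $T \in L_y$ are equivalent, meaning $T = S^g$ for some $g \in G$ (where ``equivalent'' is orbit-equivalence under $\phi$). Since solutions lie in their own $\psi$-determined CSP and the solution sets are disjoint, we must have $\psi(S) = x$ and $\psi(T) = y$. Then equivariance gives $y = \psi(T) = \psi(S^g) = \psi(S)^g = x^g$, so $x$ and $y$ lie in the same $\phi^{\psi}$-orbit, i.e.\ are equivalent in $X$. For part (iii), fix $T \in L_x$ with $\psi(T) = x$. If $S \in L_x$ is equivalent to $T$, write $S = T^g$; applying $\psi$ gives $x = \psi(S) = \psi(T^g) = \psi(T)^g = x^g$, so $g$ stabilises $x$. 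Conversely, if $g$ stabilises $x$ then $T^g$ is a solution with $\psi(T^g) = x^g = x$, so by the disjointness/membership property $T^g \in L_x$, and it is clearly equivalent to $T$. Thus the solutions of $L_x$ equivalent to $T$ are precisely the orbit of $T$ under the stabiliser of $x$ in $G$.

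I expect the main obstacle to be pinning down the bookkeeping around the word \emph{equivalent} and the precise role of disjointness. The lemma is essentially a formalisation of a standard orbit-counting argument, so the mathematical content is light; the difficulty is entirely in keeping the logical scaffolding clean. In particular, in parts (ii) and (iii) I must use that the solution sets of the $L_x$ are \emph{disjoint} to conclude that a solution's membership CSP is uniquely $\psi(T)$ — without this, the identities $\psi(S)=x$ and $\psi(T)=y$ cannot be asserted. The equivariance $\psi(T^g)=\psi(T)^g$, which is exactly the well-definedness hypothesis on $\phi^{\psi}$, is the single engine driving all three parts, and I would make sure it is invoked transparently at each step rather than buried.
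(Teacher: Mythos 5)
Your proposal is correct and follows essentially the same route as the paper's own proof: all three parts rest on the equivariance $\psi(T^g)=\psi(T)^g$, with disjointness of solution sets invoked in (ii) and (iii) exactly where the paper invokes it. The only cosmetic difference is that you split (iii) into two implications where the paper phrases it as a single chain of equivalences.
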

\begin{proof}
(i): Let $T$ be a solution of one of the CSPs in $\mc{L}$. By
assumption $T$ is a solution of $L_{\psi(T)}$ and there exists a $y
\in Y$ equivalent to $\psi(T)$, that is $\psi(T)^g=y$ for some $g\in
G$. As $\psi(T)^g=\psi(T^g)$, it follows that $T^g$ is a solution of
$L_y=L_{\psi(T^g)}$.

(ii): Let $T$ be equivalent to $S$. Thus $T^g=S$ for some $g\in
G$. Note that $x=\psi(S)$ and $y=\psi(T)$ as the solution sets of
different CSPs in $\mc{L}$ are disjoint. Hence, $x = \psi(S) = \psi(T^g) =
\psi(T)^g = y^g$, showing that $x$ is equivalent to $y$. 

(iii): Let $g\in G$ be arbitrary. Then $T^g$ is a solution of
$L_{\psi(T^g)}$. Since the CSPs in $\mc{L}$ have disjoint
solution sets, $T^g$ is a solution of $L_x$ if and only if
$\psi(T^g)=x^g=x$. Hence $T^g$ is a solution of $L_x$ if and only if
$g$ lies in the stabiliser of $x$ in $G$.
\qed
\end{proof}

Choosing $\mc{L}=\mc{L}_{\mc{F}_n}$, $\mc{T}$ to be $\Omega_n$, $\phi$
to be the action defined in \eqref{eq_action}, and $\psi$
as the mapping sending multiplication tables to the function
corresponding to their diagonal, satisfies the conditions in
Lemma~\ref{lem_case_split}. To obtain a set of non-equivalent
functions in $\mc{F}_n$ under the induced action $\phi^{\psi}$ is then a
reformulation of a well-known problem: the equivalence classes of
functions are in one-to-one correspondence with unlabelled functional
digraphs, that is directed graphs in which every vertex has outdegree~1. 
The construction of diagonals
and the role they play in the multiplication tables of semigroups is
discussed in detail in \cite[Chapter 3]{Dis10}.  
If $\ol{\mc{F}}_n$ denotes a set of representatives of non-equivalent
functions in $\mc{F}_n$ then each structural type of semigroup appears
as solution of $\mc{L}_{\ol{\mc{F}}_n}$ due to
Lemma~\ref{lem_case_split}(i). Moreover, different CSPs in
$\mc{L}_{\ol{\mc{F}}_n}$ have pairwise not (anti\=/)isomorphic solutions
due to the contraposition of Lemma~\ref{lem_case_split}(ii). This
allows us to search independently in different CSPs for solutions up
to (anti\=/)isomorphism. 

The solutions of $L_f$ form orbits under the
stabiliser of $f$ in $S_n\times C_2$ according to
Lemma~\ref{lem_case_split}(iii). Following the considerations in
Section~\ref{ssec_SB} we add the canonicity constraint
\eqref{const_lex} for every non-identity element in the stabiliser to
$L_f$ to obtain a CSP $\ol{L}_f$ with one solution from every
orbit. Hence the solutions of $\ol{\mc{L}}_{\ol{\mc{F}}_n} =
\{\ol{L}_f \mid f \in \ol{\mc{F}}_n\}$  form a set of semigroups on
$[n]$ up to (anti\=/)isomorphism.
As before we define a CSP $L^{-3}_f$ by adding constraint
\eqref{const_3nil} to $L_f$, ruling out zero semigroups and nilpotent
semigroups of degree $3$. Adding this constraint is not necessary for
all functions $f$, since $L_f$ does not always allow solutions that are
nilpotent of degree $3$. In particular, $f$ must not have more than one
fixed point. The family of CSPs $\{L^{-3}_f\mid f\in
\ol{\mc{F}}_n\}$ is denoted by $\mc{L}^{-3}_{\ol{\mc{F}}_n}$ and
analogously we get 
\begin{equation}
\label{eq_family_sg}
\ol{\mc{L}}^{-3}_{\ol{\mc{F}}_n}=\{\ol{L}^{-3}_f\mid f\in
\ol{\mc{F}}_n\}.
\end{equation}

Calculating the stabiliser in $S_n\times C_2$ of a function
$f$ corresponding to a diagonal directly under the induced action
is not very efficient. This can be avoided by reformulating the
action to a pointwise action on sets.
The reformulation was in principal already introduced in
Section~\ref{ssec_SB}. Every element $g\in S_n\times C_2$ induces a
bijection of the literals of the CSP $L_f$. Take the set of literals
$\chi_f=\{(t_{i,i}=f(i))\mid 1\leq i\leq n\}$ corresponding to the
given diagonal entries. Then $g$ is in the stabiliser of $f$ if
and only if $\chi_f^g=\chi_f$. It is not a coincidence that the
stabiliser of $f$ in $S_n\times C_2$ equals the stabiliser of a set of
literals, as shown by  the following result complementing
Lemma~\ref{lem_case_split}.

\begin{lemma}
\label{lem_SB_lit}
Let $L=(V,D,C)$ be a CSP with non-empty solution set and let $\phi:
(V\times D) \times G \rightarrow V\times D$ be an action on the
literals sending instantiations to instantiations. Denote the setwise
stabiliser of $\chi$ in $G$ by $\mathrm{Stab}_G(\chi)$. 

 If there exists a
subgroup $H \leq G$ such that each set of equivalent solutions of $L$ is
an orbit under $H$, and if there exists a subset of all literals $\chi
\subseteq V\times D$ such that the solutions of $L$ are the subsets
of~$\chi$ that are total instantiations, then each set of equivalent
solutions forms an orbit under $\mathrm{Stab}_G(\chi)$.
\end{lemma}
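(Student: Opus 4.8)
The plan is to show that for every solution $S$ of $L$ the set of solutions equivalent to $S$ coincides with the orbit $S^{\mathrm{Stab}_G(\chi)}$, where two solutions are \emph{equivalent} when they lie in the same $G$-orbit under the action induced on instantiations. By the first hypothesis this equivalence class is exactly the $H$-orbit $S^H$, so it suffices to establish the two inclusions $S^{\mathrm{Stab}_G(\chi)}\subseteq S^H$ and $S^H\subseteq S^{\mathrm{Stab}_G(\chi)}$.

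For the first inclusion I would check that $\mathrm{Stab}_G(\chi)$ maps solutions to solutions. Given $g\in\mathrm{Stab}_G(\chi)$ and a solution $S$, the image $S^g$ is again an instantiation because $\phi$ sends instantiations to instantiations; it is total because $g$ acts as a bijection on $V\times D$, so $S^g$ has exactly $|S|=|V|$ literals and hence assigns one value to each variable; and it satisfies $S^g\subseteq\chi^g=\chi$. By the second hypothesis every total instantiation contained in $\chi$ is a solution, so $S^g$ is a solution, and being in the $G$-orbit of $S$ it is equivalent to $S$. Thus $S^{\mathrm{Stab}_G(\chi)}$ lies in the equivalence class $S^H$.

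The heart of the argument, and the step I expect to be the main obstacle, is the identity $\chi=\bigcup\{S\mid S\text{ a solution of }L\}$. The inclusion $\supseteq$ is immediate since each solution is contained in $\chi$. For $\subseteq$ I would use that the solution set is non-empty to fix one solution $S_0$, and then, given a literal $(x=k)\in\chi$, form the instantiation assigning $x\mapsto k$ and agreeing with $S_0$ on every other variable. This is a total instantiation contained in $\chi$, hence a solution by the second hypothesis, and it contains $(x=k)$; therefore every literal of $\chi$ occurs in some solution. So $\chi$ is forced to be precisely the union of all solution sets, which is the key structural fact driving the proof.

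With this identity the second inclusion follows quickly: each $h\in H$ permutes the solutions, since the $H$-orbit of any solution consists of solutions and $h^{-1}\in H$; consequently $\chi^h=\bigcup_S S^h=\bigcup_S S=\chi$, giving $H\le\mathrm{Stab}_G(\chi)$ and hence $S^H\subseteq S^{\mathrm{Stab}_G(\chi)}$. Combining the two inclusions shows that the equivalence class of $S$ equals $S^{\mathrm{Stab}_G(\chi)}$, which is exactly the assertion that each set of equivalent solutions forms an orbit under $\mathrm{Stab}_G(\chi)$.
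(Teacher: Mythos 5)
Your proof is correct and takes essentially the same route as the paper's: first show that $\mathrm{Stab}_G(\chi)$ maps solutions to solutions (hence to equivalent ones, since they remain in the same $G$-orbit), then show $H\leq\mathrm{Stab}_G(\chi)$ using the identity that $\chi$ equals the union of all solutions. The only difference is that the paper simply asserts this identity (``Note that $\chi$ equals the union of all solutions''), whereas you prove it via the swap argument with a fixed solution $S_0$ --- the one place the non-emptiness hypothesis is actually used --- so your write-up is, if anything, slightly more complete than the paper's.
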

\begin{proof}
Denote the set of solutions of $L$ by $\mc{T}$. For every $T\in\mc{T}$
and every element $g\in\mathrm{Stab}_G(\chi)$ it follows from
$T^g\subseteq \chi^g = \chi$ that $T^g$ is in $\mc{T}$.

It remains to show that $H \leq \mathrm{Stab}_G(\chi)$. Note that
$\chi$ equals the union of all solutions. Let $h\in H$ then 
\[
\chi^h=\left(\bigcup_{T\in\mc{T}}T\right)^h=\bigcup_{T\in\mc{T}}T^h
=\bigcup_{T\in\mc{T}}T=\chi
\]
and hence $h\in\mathrm{Stab}_G(\chi)$.
\qed
\end{proof}

Lemma \ref{lem_SB_lit} does not directly apply to the CSPs in
$\mc{L}_{\mc{F}_n}$, because of the associativity constraint. If one
neglects this constraint, such that the solutions are all tables
fulfilling the remaining constraints, then the assumptions of
Lemma~\ref{lem_SB_lit} are satisfied. Any total instantiation for
which the values on the diagonal are in $\chi_f$ is a solution for
$f\in \mc{F}_n$, and equivalent solutions form orbits under the
stabiliser of the literals in $S_n\times C_2$. Adding the
associativity constraint back in does not change this fact, since
associativity is invariant under isomorphism and anti-isomorphism.
 
Having a family of CSPs depending on the diagonal is not enough
to resolve the computational bottleneck mentioned at the end of
Section~\ref{ssec_SB}.
The number of constraints is
still  $2n!-1$ if $f$ equals the identity function on $[n]$.
In the next section we show how to avoid this problem by
applying the technique from Lemma~\ref{lem_case_split} again.

\section{Enumeration of bands}
\label{sec_bands}

If $f$ is the identity function $\id_n$ on $[n]$ then the solutions of
$L_f$ as defined in CSP \ref{csp_diag} are the bands on $[n]$. The
structure of bands is well understood, knowledge that we shall use
together with Lemma~\ref{lem_case_split} to substitute $L_{\id_n}$
with a family of CSPs.

For the search we rely on a classification of rectangular
bands. Every rectangular band is isomorphic to 
a semigroup on a Cartesian product $I \times \Lambda$ with
multiplication defined by $(i,\lambda)(j,\mu)=(i,\mu)$, and each such
multiplication defines a rectangular band. Two rectangular bands
$I_1\times \Lambda_1$ and $I_2\times \Lambda_2$ are isomorphic if and
only if $|I_1|=|I_2|$ and $|\Lambda_1|=|\Lambda_2|$, and they are
anti-isomorphic if and only if $|I_1|=|\Lambda_2|$ and
$|\Lambda_1|=|I_2|$. Hence, the number of rectangular bands on $[n]$
up to (anti\=/)isomorphism equals the
number of divisors of $n$ that are less than or equal to $\sqrt{n}$. 

Every band is a semilattice of rectangular bands~\cite{Cli41}. To
define a family of CSPs we use the trivial consequence that the minimal
$\mc{D}$-class of a band is a rectangular band.

\begin{csp}
Given a rectangular band $R\subseteq [n]$ define a CSP
$B_R=(V_n,D_n,C_R)$ based on $L_{\mathrm{id}_n}=(V_n,D_n,C_{\id_n})$
by adding the constraints
\begin{eqnarray}
\label{eq_const_bandsI}t_{i,j}=ij & \mathrm{if} & i,j\in R\\
\label{eq_const_bandsII}t_{i,j},t_{j,i}\in R & \mathrm{if} &i\in R, j\in [n]
\end{eqnarray}
to $C_{\id_n}$ to obtain $C_R$.
\end{csp}

It is obvious that the multiplication table of every band with $R$ as
minimal $\mc{D}$-class is a solution of $B_R$. Given a solution of
$B_R$ all elements of $R$ in the corresponding band are $\mc{D}$-related
due to constraint \eqref{eq_const_bandsI} and are in the minimal
$\mc{D}$-class due to constraint \eqref{eq_const_bandsII}. Elements in
the complement of $R$ are not $\mc{D}$-related to elements in $R$, as
constraint \eqref{eq_const_bandsII} implies that their two-sided
ideals differ. Consequently, the solutions of $B_R$
are exactly the bands on $[n]$ having $R$ as their minimal
$\mc{D}$-class.
 
Let $\mc{R}^k_n$ denote the rectangular
bands on all subsets of $[n]$ of size $k$ and let
$\mc{R}_n=\cup_{k=1}^n\mc{R}^k_n$. We then define the family of CSPs
$\mc{L}_{\mc{R}_n} = \{B_R \mid R \in \mc{R}_n\}$ which fulfils the
conditions of Lemma~\ref{lem_case_split}. It follows that each
(anti\=/)isomorphism type of band will appear as a solution of exactly
one of the CSPs in $\mc{L}_{\ol{\mc{R}}_n} = \{B_R \mid R \in \ol{\mc{R}}_n\}$
where $\ol{\mc{R}}_n$ denotes a set of representatives of rectangular
bands of order at most $n$ up to (anti\=/)isomorphism.

Lemma~\ref{lem_SB_lit} allows us to compute the symmetries of a CSP
$B_R$  as a stabiliser of literals. We see that an element in
$S_n\times C_2$ is a symmetry if and only if its restriction to $R$
is an automorphism or anti-automorphism.  Adding the canonicity
constraint~\eqref{const_lex} for every non-identity element in the
symmetry group then yields $\ol{B}_R$. The number of constraints
added is maximal when $R$ is the left (or right) zero semigroup on
$[n]$, but then $R$ is the unique solution of $B_R$ because constraint
\eqref{eq_const_bandsI} covers the whole multiplication table. As no
actual search is needed in this case, replacing $\ol{L}_{\id_n}$ with
the family of CSPs
\begin{equation}
\label{eq_family_bands}
\ol{\mc{L}}_{\mc{\ol{R}}_n} = \{\ol{B}_R \mid R \in \mc{\ol{R}}_n\}
\end{equation}
strictly reduces the number of symmetries involved, thereby reducing
the effect of the computational bottleneck discussed at the ends of
Sections~\ref{ssec_SB} and~\ref{sec_cases}.

\section{The semigroups of order 9}
\label{sec_sg9}

We have used the families of CSPs introduced in the previous sections
to obtain canonical representatives for semigroups of order $9$ which
are not nilpotent of degree $3$ up to (anti\=/)isomorphism. More
precisely, we solved the CSPs in
\begin{equation}
\label{eq_families}
\ol{\mc{L}}^{-3}_{\ol{\mc{F}}_9}\setminus \left\{\ol{L}_{\id_9}\right\} 
 \mathrm{~and~} \ol{\mc{L}}_{\mc{\ol{R}}_9}, 
\end{equation}
as defined in \eqref{eq_family_sg} and \eqref{eq_family_bands}
obtaining a total of $23\,161\,651\,504$ solutions. The
semigroups not searched for were the zero semigroup and the
nilpotent semigroups of degree $3$ of order $9$. The number of the
latter is $52\,966\,239\,062\,973$~\cite[Table
  4]{DM12}. All together there are $52\,989\,400\,714\,478$ semigroups
of order $9$ up to (anti\=/)isomorphism of which almost $99.96 \%$ are
nilpotent of degree 3.

To perform the computations we used \textsf{GAP}~\cite{GAP4} and
\textsf{Minion}~\cite{GJM06}; the former to calculate
stabilisers and also for the automated creation of the input files;
the latter to solve the CSPs. The computations took
around 87 hours on a machine with 2.66\,GHz Intel X-5430 processor and
8\,GB RAM. The code can be found in~\cite[Appendix C]{Dis10}.

\subsection{Classification}
\label{ssec:class}
\begin{table}
\caption{\label{tab_by_idem}Numbers of semigroups on
  $[n]$ up to (anti\=/)isomorphism}
{\small
\begin{tabular}{crrrrrrrr}
\toprule
{\bf n}
&\multicolumn{1}{c}{\bf 2}
&\multicolumn{1}{c}{\bf 3}
&\multicolumn{1}{c}{\bf 4}
&\multicolumn{1}{c}{\bf 5}
&\multicolumn{1}{c}{\bf 6}
&\multicolumn{1}{c}{\bf 7}
&\multicolumn{1}{c}{\bf 8}
&\multicolumn{1}{c}{\bf 9}\\
\midrule
\#
& 4 & 18 & 126 & 1\,160 & 15\,973 & 836\,021 & 1\,843\,120\,128 
& 52\,989\,400\,714\,478\\
\midrule
$e$ & \multicolumn{8}{c}{{\it by number $e$ of idempotents}}\\
\midrule
1&
2 & 5 & 19 & 132 & 3\,107 & 623\,615 & 1\,834\,861\,133 &
52\,976\,551\,026\,562\\ 
2&
2 & 7 & 37 & 216 & 1\,780 & 32\,652 & 4\,665\,709 & 12\,710\,266\,442\\
3&
& 6 & 44 & 351 & 3\,093 & 33\,445 & 600\,027 & 68\,769\,167\\ %
4&
&& 26 & 326 & 4\,157 & 53\,145 & 754\,315 & 14\,050\,493\\ %
5&
&&& 135 & 2\,961 & 56\,020 & 1\,007\,475 & 18\,660\,074\\ %
6&
&&&& 875 & 30\,395 & 822\,176 & 20\,044\,250\\ %
7&
&&&&& 6\,749 & 348\,692 & 12\,889\,961\\ %
8&
&&&&&& 60\,601 & 4\,389\,418\\ %
9&
&&&&&&& 618\,111\\ %
\midrule
$d$ & \multicolumn{8}{c}{{\it by minimal generator number $d$}}\\
\midrule
1&
2 & 3 & 4 & 5 & 6 & 7 & 8 & 9\\
2&
2 & 11 & 48 & 149 & 441 & 1\,230 & 3\,464 & 9\,945\\
3&
& 4 & 65 & 588 & 4\,506 & 27\,743 & 156\,898 & 911\,672\\
4&
&& 9 & 397 & 8\,370 & 549\,037 & 18\,014\,631 & 240\,061\,550\\
5&
&&& 21 & 2\,600 & 239\,410 & 1\,774\,277\,445 & 791\,830\,876\,983\\
6&
&&&& 50 & 18\,474 & 50\,525\,311 & 52\,140\,869\,887\,616\\
7&
&&&&& 120 & 142\,082 & 56\,457\,790\,001\\
8&
&&&&&& 289 & 1\,176\,005\\
9&
&&&&&&& 697\\
\bottomrule
\end{tabular}
}
\end{table}

We analysed the semigroups obtained by search to extract various
classification results.  
The numbers of semigroups with $9$ elements sorted by their number of
idempotents are listed in Table~\ref{tab_by_idem} together with
numbers for lower orders from~\cite[Table 4.1]{SYT94},
which we also verified using our search method. Also listed in the
table are numbers of semigroups by their minimal generator number. For
nilpotent semigroups of degree 3 these numbers are easily calculated
from the summands of the formula given in \cite[Theorem 2.3]{DM12}
using the fact that every nilpotent semigroup is generated by its
indecomposable elements.

Information on the classification of semigroups of order 9 by certain
properties is summarised in Table~\ref{tab_sg_9}. The
total number of commutative semigroups agrees with the result
from~\cite{Gri96}. The selection of properties was largely inspired
by~\cite[Table 4.2]{SYT94} except that we also report
numbers of self-dual semigroups. We determined the latter using the
method described in the second paragraph of Section~\ref{ssec:autos}
to the CSPs from \eqref{eq_families}. In addition we needed the
number of self-dual semigroups of order 9 that are nilpotent of degree
$3$ which is $606\,097\,491$~\cite[Table 5]{DM12}. Note that except
for the regular semigroups all classes listed in Table~\ref{tab_sg_9}
consist entirely of self-dual semigroups.

\begin{table}
\caption{\label{tab_sg_9}Numbers of semigroups of order 9 with various
properties}
\begin{tabular}{crrrrr}
\toprule
{\bf Idpts} & self-dual\hfill \ & commutative\hfill \ & regular\hfill \ &
inverse\hfill \ & comm.-inv.\hfill \ \\
\midrule
{\bf 1} & 613\,365\,656 & 9\,940\,825 & 2 & 2 & 2 \\
{\bf 2} & 8\,265\,721 & 664\,080 & 23 & 23 & 16 \\
{\bf 3} & 739\,317 & 249\,330 & 148 & 129 & 111 \\
{\bf 4} & 410\,158 & 222\,637 & 830 & 567 & 504 \\
{\bf 5} & 328\,937 & 201\,060 & 4\,136 & 1\,750 & 1\,555 \\
{\bf 6} & 223\,226 & 148\,647 & 17\,535 & 3\,870 & 3\,460 \\
{\bf 7} & 113\,160 & 82\,481 & 66\,822 & 6\,582 & 6\,137 \\
{\bf 8} & 38\,979 & 30\,789 & 217\,437 & 7\,505 & 7\,505 \\
{\bf 9} & 7\,510 & 5\,994 & 618\,111 & 5\,994 & 5\,994\\
\midrule
$\mathbf{\sum}$ & 623\,492\,664 & 11\,545\,843 & 925\,044 & 26\,422 &
25\,284 \\
\bottomrule
\end{tabular}
\end{table} 

\subsection{Up to isomorphism}
As mentioned in the introduction we also obtained results for the
classification of semigroups up to isomorphism. In general this is
achieved by replacing the group $S_n\times C_2$ wherever it appears in
the considerations regarding symmetries of the CSPs with the group
$S_n$. In many situations we can alternatively take advantage of the
fact that we determined self-dual semigroups: twice the number of
semigroups up to (anti\=/)isomorphism minus the number of
self-dual semigroups yields the number of semigroups up to
isomorphism. Hence there are $105\,978\,177\,936\,292$
semigroups of order 9 up to isomorphism. These semigroups together
with those of lower orders are classified by their number of
idempotents and by their minimal generator number in
Table~\ref{tab_by_idem_iso}.

\begin{table}
\caption{\label{tab_by_idem_iso}Numbers of semigroups on $[n]$ up to
  isomorphism}
{\small
\begin{tabular}{crrrrrrrr}
\toprule
{\bf n}
&\multicolumn{1}{c}{\bf 2}
&\multicolumn{1}{c}{\bf 3}
&\multicolumn{1}{c}{\bf 4}
&\multicolumn{1}{c}{\bf 5}
&\multicolumn{1}{c}{\bf 6}
&\multicolumn{1}{c}{\bf 7}
&\multicolumn{1}{c}{\bf 8}
&\multicolumn{1}{c}{\bf 9}\\
\midrule
\#
& 5 & 24 & 188 & 1\,915 & 28\,634 & 1\,627\,672 & 3\,684\,030\,417
& 105\,978\,177\,936\,292\\
\midrule
$e$ & \multicolumn{8}{c}{{\it by number $e$ of idempotents}}\\
\midrule
1&
2 & 5 & 20 & 171 & 5\,284 & 1\,224\,331 & 3\,667\,785\,000
& 105\,952\,488\,687\,468\\
2&
3 & 9 & 50 & 309 & 2\,806 & 58\,583 & 9\,207\,430 &  25\,412\,267\,163\\
3&
& 10 & 72 & 590 & 5\,422 & 61\,323 & 1\,150\,085 & 136\,799\,017\\
4&
&& 46 & 594 & 7\,772 & 101\,539 & 1\,466\,691 & 27\,690\,828 \\
5&
&&& 251 & 5\,668 & 109\,107 & 1\,983\,558 & 36\,991\,211\\
6&
&&&& 1\,682 & 59\,576 & 1\,626\,956 & 39\,865\,274\\
7&
&&&&& 13\,213 & 690\,871 & 25\,666\,762\\
8&
&&&&&& 119\,826 & 8\,739\,857\\
9&
&&&&&&& 1\,228\,712\\
\midrule
$d$ & \multicolumn{8}{c}{{\it by minimal generator number $d$}}\\
\midrule
1&
2 & 3 & 4 & 5 & 6 & 7 & 8 & 9\\
2&
3 & 14 & 64 & 212 & 664 & 1\,930 & 5\,678 & 17\,010\\
3&
& 7 & 103 & 954 & 7\,835 & 50\,541 & 294\,622 & 1\,751\,293\\
4&
&& 17 & 703 & 15\,144 & 1\,075\,353 & 35\,850\,090 & 479\,050\,352\\
5&
&&& 41 & 4\,886 & 463\,784 & 3\,546\,839\,307 & 1\,583\,613\,947\,364\\
6&
&&&& 99 & 35\,818 & 100\,760\,203 & 104\,281\,178\,828\,643\\
7&
&&&&& 239 & 279\,932 & 112\,902\,004\,698\\
8&
&&&&&& 577 & 2\,335\,530\\
9&
&&&&&&& 1\,393\\
\bottomrule
\end{tabular}
}
\end{table}

\section{Automorphism groups and distinct semigroups on a set}
\label{sec_autos}

To determine the automorphism groups of semigroups with at most $9$
elements, we use the idea described in the first paragraph of
Section~\ref{ssec:autos}.
Technically there are two different methods: we can record for each
semigroup the isomorphisms that are automorphisms, or we can perform
one search for every possible automorphism group. Neither approach is
by itself feasible for $n=9$, because of the large numbers of
semigroups with $9$ elements and of subgroups of $S_9$. We therefore
took a mixed approach distinguishing the
following mutually exclusive cases depending on the orders of
automorphisms a semigroup $S$ allows.

\begin{enumerate}[(i)]
\item $\mbox{Aut}(S)\cong C^k_2$, $k\in\N$:
  Require strict inequality in all constraints that do not correspond
  to a permutation of order 2. Require also that equality holds for exactly
  $2^k-1$ of the remaining  constraints.

\item $|\mbox{Aut}(S)|=2^k$, $k\in\N$,
      but $\mbox{Aut}(S)\not\cong C_2^k$: 
  Require strict inequality in all constraints that do not
  correspond to a permutation of order $2^m, m\in \N$. Require also that
  equality holds for at least one constraint corresponding
  to a permutation of order 4.

\item $|\mbox{Aut}(S)|$ contains an odd prime factor:
  Require that equality
  holds for at least one constraint corresponding to a permutation of
  odd prime order.
\end{enumerate}
Semigroups covered by none of the three cases must have the
trivial group as automorphism group. In the first case $\lfloor
n/2\rfloor$ is an upper bound for $k$ because $C^k_2$ is a subgroup of
$S_n$ if and only if $2k\leq n$ 
(see \cite[Theorem 2]{Joh71}). We run a separate computation for each
admissible value of $k$, specifying a unique isomorphism type of
automorphism group, and record the number of solutions. In the other
two cases we let \textsf{Minion} output a list of automorphisms for
each solution and read it into {\sf GAP}. We then use the
identification function in the {\sf SmallGroups}
library~\cite{SmallGroups} to find the isomorphism types of the
groups. Note that it was not possible to exclude nilpotent semigroups
of degree 3 from the various searches as their
numbers with prescribed automorphism groups are unknown.

\begin{table}
\caption{\label{aut_2}Automorphism groups of semigroups of order 2}
\begin{tabular}{ccrr}
\toprule
automorphism & ID & \multicolumn{1}{c}{number up to} &\multicolumn{1}{c}{number up to}\\ 
group & & \multicolumn{1}{c}{(anti\=/)isomorphism} & \multicolumn{1}{c}{isomorphism}\\
\midrule
trivial & $(1,1)$ & 3 & 3\\
$C_2$ & $(2,1)$ & 1 & 2\\
\bottomrule
\end{tabular}
\end{table}

\begin{table}
\caption{\label{aut_3}Automorphism groups of semigroups of order 3}
\begin{tabular}{ccrr}
\toprule
automorphism & ID & \multicolumn{1}{c}{number up to} &\multicolumn{1}{c}{number up to}\\ 
group & & \multicolumn{1}{c}{(anti\=/)isomorphism} & \multicolumn{1}{c}{isomorphism}\\
\midrule
trivial & $(1,1)$ & 12 & 15\\
$C_2$ & $(2,1)$ & 5 & 7\\
$S_3$ & $(6,1)$ & 1 & 2\\
\bottomrule
\end{tabular}
\end{table}

\begin{table}
\caption{\label{aut_4}Automorphism groups of semigroups of order 4}
\begin{tabular}{ccrr}
\toprule
automorphism & ID & \multicolumn{1}{c}{number up to} &\multicolumn{1}{c}{number up to}\\ 
group & & \multicolumn{1}{c}{(anti\=/)isomorphism} & \multicolumn{1}{c}{isomorphism}\\
\midrule
trivial & $(1,1)$ & 78 & 112\\
$C_2$ & $(2,1)$ & 39 & 62\\
$C_2 \times C_2$ & $(4,2)$ & 3 & 5\\
$S_3$ & $(6,1)$ & 5 & 7\\
$S_4$ & $(24,12)$ & 1 & 2\\
\bottomrule
\end{tabular}
\end{table}

\begin{table}
\caption{\label{aut_5}Automorphism groups of semigroups of order 5}
\begin{tabular}{ccrr}
\toprule
automorphism & ID & \multicolumn{1}{c}{number up to} &\multicolumn{1}{c}{number up to}\\ 
group & & \multicolumn{1}{c}{(anti\=/)isomorphism} & \multicolumn{1}{c}{isomorphism}\\
\midrule
trivial & $(1,1)$ & 746 & 1221\\
$C_2$ & $(2,1)$ & 342 & 576\\
$C_3$ & $(3,1)$ & 2 & 2\\
$C_4$ & $(4,1)$ & 1 & 1\\
$C_2 \times C_2$ & $(4,2)$ & 26 & 46\\
$S_3$ & $(6,1)$ & 33 & 51\\
$D_8$ & $(8,3)$ & 1 & 2\\
$D_{12}$ & $(12,4)$ & 4 & 8\\
$S_4$ & $(24,12)$ & 4 & 6\\
$S_5$ & $(120,34)$ & 1 & 2\\
\bottomrule
\end{tabular}
\end{table}

\begin{table}
\caption{\label{aut_6}Automorphism groups of semigroups of order 6}
\begin{tabular}{ccrr}
\toprule
automorphism & ID & \multicolumn{1}{c}{number up to} &\multicolumn{1}{c}{number up to}\\ 
group & & \multicolumn{1}{c}{(anti\=/)isomorphism} & \multicolumn{1}{c}{isomorphism}\\
\midrule
trivial & $(1,1)$ & 10\,965 & 19\,684\\
$C_2$ & $(2,1)$ & 4\,121 & 7\,397\\
$C_3$ & $(3,1)$ & 26 & 32\\
$C_4$ & $(4,1)$ & 7 & 7\\
$C_2 \times C_2$ & $(4,2)$ & 441 & 806\\
$S_3$ & $(6,1)$ & 300 & 506\\
$D_8$ & $(8,3)$ & 17 & 30\\
$C_2 \times C_2 \times C_2$ & $(8,5)$ & 6 & 12\\
$D_{12}$ & $(12,4)$ & 49 & 92\\
$S_4$ & $(24,12)$ & 30 & 48\\
$S_3 \times S_3$ & $(36,10)$ & 2 & 4\\
$C_2 \times S_4$ & $(48,48)$ & 4 & 8\\
$S_5$ & $(120,34)$ & 4 & 6\\
$S_6$ & $(720,763)$ & 1 & 2\\
\bottomrule
\end{tabular}
\end{table}

\begin{table}
\caption{\label{aut_7}Automorphism groups of semigroups of order 7}
\begin{tabular}{ccrr}
\toprule
automorphism & ID or & \multicolumn{1}{c}{number up to} &\multicolumn{1}{c}{number up to}\\ 
group & order & \multicolumn{1}{c}{(anti\=/)isomorphism} & \multicolumn{1}{c}{isomorphism}\\
\midrule
trivial & $(1,1)$ & 746\,277 & 1\,458\,882\\
$C_2$ & $(2,1)$ & 76\,704 & 144\,879\\
$C_3$ & $(3,1)$ & 412 & 620\\
$C_4$ & $(4,1)$ & 82 & 101\\
$C_2 \times C_2$ & $(4,2)$ & 7\,314 & 13\,756\\
$C_5$ & $(5,1)$ & 6 & 6\\
$S_3$ & $(6,1)$ & 3\,638 & 6\,552\\
$C_6$ & $(6,2)$ & 37 & 53\\
$C_4 \times C_2$ & $(8,2)$ & 4 & 6\\
$D_8$ & $(8,3)$ & 169 & 282\\
$C_2 \times C_2 \times C_2$ & $(8,5)$ & 172 & 330\\
$D_{10}$ & $(10,1)$ & 2 & 2\\
$D_{12}$ & $(12,4)$ & 790 & 1\,476\\
$C_2 \times D_8$ & $(16,11)$ & 10 & 20\\
$S_4$ & $(24,12)$ & 277 & 475\\
$C_2 \times C_2 \times S_3$ & $(24,14)$ & 14 & 28\\
$S_3 \times S_3$ & $(36,10)$ & 24 & 44\\
$C_2 \times S_4$ & $(48,48)$ & 45 & 86\\
$(S_3 \times S_3) : C_2$ & $(72,40)$ & 1 & 2\\
$S_5$ & $(120,34)$ & 30 & 48\\
$S_3 \times S_4$ & $(144,183)$ & 4 & 8\\
$C_2 \times S_5$ & $(240,189)$ & 4 & 8\\
$S_6$ & $(720,763)$ & 4 & 6\\
$S_7$ & $5040$ & 1 & 2\\
\bottomrule
\end{tabular}
\end{table}

\begin{table}
\caption{\label{aut_8}Automorphism groups of semigroups of order 8}
\begin{tabular}{ccrr}
\toprule
automorphism & ID or & \multicolumn{1}{c}{number up to} &\multicolumn{1}{c}{number up to}\\ 
group & order & \multicolumn{1}{c}{(anti\=/)isomorphism} & \multicolumn{1}{c}{isomorphism}\\
\midrule
trivial & $(1,1)$ & 1\,834\,638\,770 & 3\,667\,253\,972\\
$C_2$ & $(2,1)$ & 8\,176\,697 & 16\,194\,638\\
$C_3$ & $(3,1)$ & 17\,297 & 31\,567\\
$C_4$ & $(4,1)$ & 1\,270 & 1\,907\\
$C_2 \times C_2$ & $(4,2)$ & 188\,316 & 363\,902\\
$C_5$ & $(5,1)$ & 92 & 110\\
$S_3$ & $(6,1)$ & 69\,275 & 131\,242\\
$C_6$ & $(6,2)$ & 1\,249 & 2\,086\\
$C_4 \times C_2$ & $(8,2)$ & 105 & 153\\
$D_8$ & $(8,3)$ & 2\,238 & 3\,876\\
$C_2 \times C_2 \times C_2$ & $(8,5)$ & 5\,324 & 10\,255\\
$C_3 \times C_3$ & $(9,2)$ & 5 & 6\\
$D_{10}$ & $(10,1)$ & 28 & 34\\
$D_{12}$ & $(12,4)$ & 13\,583 & 25\,883\\
$C_2 \times D_8$ & $(16,11)$ & 263 & 490\\
$C_2 \times C_2 \times C_2 \times C_2$ & $(16,14)$ & 15 & 29\\
$C_3 \times S_3$ & $(18,3)$ & 40 & 56\\
$C_5 : C_4$ & $(20,3)$ & 1 & 1\\
$C_4 \times S_3$ & $(24,5)$ & 4 & 6\\
$S_4$ & $(24,12)$ & 3\,461 & 6\,293\\
$C_2 \times A_4$ & $(24,13)$ & 4 & 4\\
$C_2 \times C_2 \times S_3$ & $(24,14)$ & 491 & 966\\
$S_3 \times S_3$ & $(36,10)$ & 368 & 674\\
$D_8 \times S_3$ & $(48,38)$ & 11 & 22\\
$C_2 \times S_4$ & $(48,48)$ & 768 & 1\,445\\
$(S_3 \times S_3) : C_2$ & $(72,40)$ & 16 & 28\\
$C_2 \times S_3 \times S_3$ & $(72,46)$ & 12 & 24\\
$C_2 \times C_2 \times S_4$ & $(96,226)$ & 14 & 28\\
$S_5$ & $(120,34)$ & 277 & 475\\
$S_3 \times S_4$ & $(144,183)$ & 44 & 84\\
PSL$(3,2)$ & $(168,42)$ & 1 & 1\\
$C_2 \times S_5$ & $(240,189)$ & 44 & 84\\
$S_4 \times S_4$ & $(576,8653)$ & 2 & 4\\
$S_6$ & $(720,763)$ & 30 & 48\\
$S_5 \times S_3$ & $(720,767)$ & 4 & 8\\
$C_2 \times S_6$ & $(1440,5842)$ & 4 & 8\\
$S_7$ & $5040$ & 4 & 6\\
$S_8$ & $40320$ & 1 & 2\\
\bottomrule
\end{tabular}
\end{table}

\begin{table}
\caption{\label{aut_9}Automorphism groups of semigroups of order 9}
\begin{tabular}{ccrr}
\toprule
automorphism & ID or & \multicolumn{1}{c}{number up to} &\multicolumn{1}{c}{number up to}\\ 
group & order & \multicolumn{1}{c}{(anti\=/)isomorphism} & \multicolumn{1}{c}{isomorphism}\\
\midrule
trivial & $(1,1)$ & 52\,961\,873\,362\,324 & 105\,923\,135\,799\,007\\
$C_2$ & $(2,1)$ & 27\,478\,363\,462 & 54\,944\,831\,554\\
$C_3$ & $(3,1)$ & 6\,329\,218 & 12\,562\,447\\
$C_4$ & $(4,1)$ & 53\,591 & 97\,613\\
$C_2 \times C_2$ & $(4,2)$ & 33\,882\,706 & 67\,399\,096\\
$C_5$ & $(5,1)$ & 1\,547 & 2\,295\\
$S_3$ & $(6,1)$ & 7\,886\,998 & 15\,634\,673\\
$C_6$ & $(6,2)$ & 94\,521 & 180\,353\\
$C_7$ & $(7,1)$ & 18 & 18\\
$C_4 \times C_2$ & $(8,2)$ & 3\,286 & 5\,478\\
$D_8$ & $(8,3)$ & 59\,672 & 110\,744\\
$C_2 \times C_2 \times C_2$ & $(8,5)$ & 203\,597 & 396\,962\\
$C_3 \times C_3$ & $(9,2)$ & 291 & 449\\
$D_{10}$ & $(10,1)$ & 420 & 626\\
$C_{10}$ & $(10,2)$ & 108 & 156\\
$C_{12}$ & $(12,2)$ & 26 & 34\\
$A_4$ & $(12,3)$ & 3 & 3\\
$D_{12}$ & $(12,4)$ & 354\,352 & 689\,994\\
$C_6 \times C_2$ & $(12,5)$ & 850 & 1\,496\\
$D_{14}$ & $(14,1)$ & 4 & 4\\
$C_4 \times C_2 \times C_2$ & $(16,10)$ & 18 & 32\\
$C_2 \times D_8$ & $(16,11)$ & 5\,530 & 10\,252\\
$C_2 \times C_2 \times C_2 \times C_2$ & $(16,14)$ & 1\,345 & 2\,654\\
$C_3 \times S_3$ & $(18,3)$ & 1\,286 & 2\,135\\
$(C_3 \times C_3) : C_2$ & $(18,4)$ & 1 & 2\\
$C_5 : C_4$ & $(20,3)$ & 8 & 9\\
$D_{20}$ & $(20,4)$ & 36 & 52\\
$C_7 : C_3$ & $(21,1)$ & 2 & 2\\
$C_4 \times S_3$ & $(24,5)$ & 105 & 153\\
$C_3 \times D_8$ & $(24,10)$ & 26 & 36\\
$S_4$ & $(24,12)$ & 67\,321 & 128\,046\\
$C_2 \times A_4$ & $(24,13)$ & 57 & 69\\
$C_2 \times C_2 \times S_3$ & $(24,14)$ & 15\,150 & 29\,589\\
$C_4 \times D_8$ & $(32,25)$ & 1 & 2\\
$(C_2 \times C_2 \times C_2 \times C_2) : C_2$ & $(32,27)$ & 10 & 19\\
$C_2 \times C_2 \times D_8$ & $(32,46)$ & 83 & 166\\
$S_3 \times S_3$ & $(36,10)$ & 6\,429 & 12\,123\\
GL$(2,3)$ & $(48,29)$ & 1 & 1\\
$D_8 \times S_3$ & $(48,38)$ & 263 & 486\\
$C_2 \times S_4$ & $(48,48)$ & 13\,204 & 25\,243\\
$C_2 \times C_2 \times C_2 \times S_3$ & $(48,51)$ & 44 & 88\\
$D_8 \times D_8$ & $(64,226)$ & 1 & 1\\
$(S_3 \times S_3) : C_2$ & $(72,40)$ & 158 & 263\\
$C_3 \times S_4$ & $(72,42)$ & 34 & 50\\
$C_2 \times S_3 \times S_3$ & $(72,46)$ & 474 & 940\\
$C_4 \times S_4$ & $(96,186)$ & 4 & 6\\
$C_2 \times C_2 \times S_4$ & $(96,226)$ & 479 & 946\\
$S_5$ & $(120,34)$ & 3\,454 & 6\,281\\
$S_3 \times S_4$ & $(144,183)$ & 705 & 1\,327\\
$C_2 \times ((S_3 \times S_3) : C_2)$ & $(144,186)$ & 11 & 22\\
PSL$(3,2)$ & $(168,42)$ & 3 & 3\\
$D_8 \times S_4$ & $(192,1472)$ & 10 & 20\\
$S_3 \times S_3 \times S_3$ & $(216,162)$ & 4 & 8\\
$C_2 \times S_5$ & $(240,189)$ & 755 & 1\,423\\
$C_2 \times S_3 \times S_4$ & $(288,1028)$ & 24 & 48\\
$((((C_2 \times D_8) : C_2) : C_3) : C_2) : C_2$ & $(384,5602)$ & 1 & 2\\
$C_2 \times C_2 \times S_5$ & $(480,1186)$ & 14 & 28\\
$S_4 \times S_4$ & $(576,8653)$ & 20 & 38\\
$S_6$ & $(720,763)$ & 277 & 475\\
$S_5 \times S_3$ & $(720,767)$ & 44 & 84\\
$(S_4 \times S_4) : C_2$ & $(1152,157849)$ & 1 & 2\\
$C_2 \times S_6$ & $(1440,5842)$ & 44 & 84\\
$S_5 \times S_4$ & 2880 & 4 & 8\\
$S_6 \times S_3$ & 4320 & 4 & 8\\
$S_7$ & 5040 & 30 & 48\\
$C_2 \times S_7$ & 10080 & 4 & 8\\
$S_8$ & 40320 & 4 & 6\\
$S_9$ & 362880 & 1 & 2\\
\bottomrule
\end{tabular}
\end{table}

Tables~\ref{aut_2}, \ref{aut_3}, \ref{aut_4}, \ref{aut_5},
\ref{aut_6}, \ref{aut_7}, \ref{aut_8},
and~\ref{aut_9} list the automorphism groups of
semigroups up to (anti\=/)isomorphism and up to isomorphism with 2 to 9
elements. There is one table
for each order, containing one line for each isomorphism type of
automorphism group. The groups are identified by their ID in the {\sf
  SmallGroups} library~\cite{SmallGroups}, if their order is less than
2000. In all cases a structural description, computed using the {\sf
  GAP} command {\tt StructureDescription}, is also given.  Finally, the
numbers of semigroups up to (anti\=/)isomorphism and up to isomorphism 
with the given group as automorphism group are provided. The numbers
for semigroups up to (anti\=/)isomorphism of order at most 7 agree with
those from~\cite{ABMN10}, except for an obviously typographic omission
of $C_2\times C_2$ as automorphism group for semigroups of order
5. The numbers for semigroups up to (anti\=/)isomorphism of order 9
partially differ from those in \cite[Table A.15]{Dis10}, where some
semigroups belonging to Case (iii) above were incorrectly counted as
having trivial automorphism group.

The {\sf Minion} computations to obtain the results took nearly
two months on our machine with 2.66\,GHz Intel X-5430
processor. To reduce the possibility of an error 
 we confirmed the numbers in a
second run using a different setup. Details about the code used to
compute the automorphism groups can be found in~\cite[Appendix
  C.2.3]{Dis10}.

An immediate observation is that most of the semigroups have trivial
automorphism group and their ratio to all semigroups seems
to converge to 1 with increasing order, thus supporting a conjecture
from~\cite{DM12}.

We further use our results to deduce the numbers of distinct semigroups
on sets with 2 to 9 elements. For a semigroup $S$ the number of
isomorphic semigroups on the same underlying set equals
$|S|!/|\operatorname{Aut}(S)|$. Hence the number of distinct
semigroups on a set with $n$ elements equals
\begin{equation*}
n!\sum_{S} \frac{1}{|\operatorname{Aut}(S)|}
\end{equation*}
where the summation runs over a set of representatives of semigroups
of order $n$ up to isomorphism. New results in
Table~\ref{tab_all} are the numbers for orders 8 and 9, for lower
orders we confirm the numbers available from \cite[Sequence
  A023814]{OEIS}.

\begin{table}
\caption{\label{tab_all}Numbers of distinct semigroups on $[n]$}
\label{all_semi}
\begin{tabular}{lr}
\toprule
$n$ & \multicolumn{1}{c}{semigroups on $[n]$}\\
\midrule
2 & 8\\
3 & 113\\
4 & 3\,492\\
5 & 183\,732\\
6 & 17\,061\,118\\
7 & 7\,743\,056\,064\\
8 & 148\,195\,347\,518\,186\\
9 & 38\,447\,365\,355\,811\,944\,462\\
\bottomrule
\end{tabular}
\end{table}

While it is known that every group appears as the automorphism group
of some semigroup, the number of isomorphism types of automorphism
groups is small in comparison with the number of all isomorphism
types of subgroups of the symmetric group  
(Table~\ref{tab_autgrp_isos}). Information about which types of
automorphism groups appear could be useful in the development of
algorithms to compute the automorphism group of a given semigroup. 
We observe in particular that for $2\leq n\leq 9$ only
rectangular bands have a transitive subgroup of $S_n$ as automorphism
group. We complete this section by showing that this statement holds
for every order.

\begin{table}
\caption{\label{tab_autgrp_isos}Comparison of the numbers of
  isomorphism types of (a) subgroups of the symmetric group of degree
  $n$ with (b) automorphism groups of semigroups of order $n$}
{\small
\begin{tabular}{lrrrrrrrrr}
\toprule
$n$ 
& 1 & 2 & 3 & 4 & 5 & 6 & 7 & 8 & 9 \\
\midrule
(a) subgroups of $S_n$
& 1 & 2 & 4 & 9 & 16 & 29 & 55 & 137 & 241\\
(b) $\mbox{Aut}(S)$ for $|S|=n$
& 1 & 2 & 3 & 5 & 10 & 14 & 24 & 38 & 65\\
\bottomrule
\end{tabular}
}
\end{table} 

\begin{proposition}
\label{prop_autgrp}
Let $R$ be a finite semigroup. Then the following are equivalent:
\begin{enumerate}[(i)]
\item $R$ is a rectangular band.
\item The automorphism group of $R$ acts transitively on $R$.
\end{enumerate}
\end{proposition}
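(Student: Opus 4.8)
The plan is to prove the two implications separately. The direction (i) $\Rightarrow$ (ii) is straightforward from the explicit description of rectangular bands recalled in Section~\ref{sec_bands}. Since both the automorphism group and the transitivity of its action are invariant under isomorphism, I may take $R = I \times \Lambda$ with $(i,\lambda)(j,\mu) = (i,\mu)$. First I would check that every pair $(\alpha,\beta) \in S_I \times S_\Lambda$ induces an automorphism $(i,\lambda) \mapsto (\alpha(i),\beta(\lambda))$; this is a one-line verification using the multiplication rule. Transitivity is then immediate: given $(i,\lambda)$ and $(j,\mu)$, choose $\alpha$ with $\alpha(i)=j$ and $\beta$ with $\beta(\lambda)=\mu$, and the resulting automorphism carries one element to the other.

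For the converse (ii) $\Rightarrow$ (i), I would proceed in two stages. The first stage shows that transitivity forces $R$ to be a band. As $R$ is finite it contains at least one idempotent $e$. Automorphisms preserve idempotents, and by hypothesis for every $x \in R$ there is an automorphism $\pi$ with $\pi(e) = x$; hence $x = \pi(e) = \pi(e^2) = \pi(e)^2 = x^2$, so every element of $R$ is idempotent and $R$ is a band.

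The second stage uses the structure of bands to collapse $R$ to a single $\mc{D}$-class. By the decomposition of a band as a semilattice of rectangular bands~\cite{Cli41}, the quotient $Y = R/\mc{D}$ is a finite semilattice and each $\mc{D}$-class is a rectangular band. Since automorphisms of $R$ preserve Green's relation $\mc{D}$, the action of $\operatorname{Aut}(R)$ descends to an action on $Y$ by semilattice automorphisms, and this induced action is again transitive because the quotient map $R \to Y$ is surjective and equivariant. The key point is that a finite semilattice has a least element under its natural order, which every semilattice automorphism must fix; a transitive action possessing a fixed point can occur only on a one-element set, so $|Y| = 1$. Hence $R$ consists of a single $\mc{D}$-class and is therefore a rectangular band.

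I expect the main obstacle to lie in this second stage, specifically in correctly invoking the semilattice-of-rectangular-bands decomposition and in recognising that the induced automorphisms must fix the minimum of $Y$, so that transitivity collapses the semilattice to a point. The idempotent argument in the first stage and the explicit automorphisms used in the forward direction are routine by comparison.
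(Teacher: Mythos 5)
Your proposal is correct and follows essentially the same route as the paper's own proof: the explicit $S_I\times S_\Lambda$ automorphisms for (i)~$\Rightarrow$~(ii), and for (ii)~$\Rightarrow$~(i) the idempotent argument, Clifford's semilattice-of-rectangular-bands decomposition, and the induced transitive action on the semilattice forcing it to be trivial. The only difference is that you spell out details the paper leaves implicit, namely that the induced action on $R/\mc{D}$ is transitive and that a finite semilattice has a least element fixed by every automorphism.
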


\begin{proof}
(i) $\Rightarrow$ (ii):
Let $R=I \times \Lambda$. Then every element in the direct product
$S_I\times S_{\Lambda}$ is an automorphism of $R$. 

(ii) $\Rightarrow$ (i):
Let $e \in R$ be an idempotent. For every $a\in R$ there exists an
automorphism $\pi$ of $R$ that sends $e$ to $a$. Hence $R$ consists
entirely of idempotents. As a band $R$ is a semilattice of rectangular
bands. Clearly, the set of rectangular bands, that is the set of
$\mc{D}$-classes of $R$, is preserved by every automorphism. Hence
every automorphism induces an automorphism of the semilattice. The
automorphism group of a finite semilattice is transitive if and only
if the semilattice is trivial. Therefore $R$ consists of a single
rectangular band.\qed
\end{proof}

We conclude noting that the previous proposition implies that there
are arbitrarily high orders, all prime numbers, for which the full
symmetric group is the only transitive automorphism group for a
semigroup of the given order.

\begin{acknowledgements}
We thank Robert Gray, James Mitchell and Steve Linton for helpful
discussions, and James Mitchell and Csaba Schneider for comments 
on earlier versions of the paper.

The first author acknowledges the financial support from the doctoral
program of the University of St Andrews and from the project
PTDC/MAT/101993/2008 of Centro de \'Algebra da Universidade de Lisboa,
financed by FCT and FEDER.

The second author acknowledges the financial support by United Kingdom
Engineering and Physical Sciences Research Council (EPSRC) grant
EP/H004092/1.
\end{acknowledgements}

\bibliographystyle{spmpsci}      %

\end{document}